\newtheorem{thm}{Theorem}[section]
\newtheorem{defn}[thm]{Definition}
\newtheorem{lem}[thm]{Lemma}
\newtheorem{cor}[thm]{Corollary}
 \newcommand{\R}{{\mathbb R}}
\newcommand{\p}{{\EuScript P}}
\newcommand{\q}{{\EuScript Q}}
\newcommand{\Ss}{{\EuScript S}}
\newcommand{\Tt}{{\EuScript T}}
\newcommand{\Rr}{{\EuScript R}}
\newcommand{\x}{x} \newcommand{\y}{y}
 \renewcommand{\t}{t}
\newcommand{\bfq}{q}
\numberwithin{equation}{section}
\begin{document}
\begin{center}
\Large
Unbounded convex polygons as polynomial images of the plane\footnote{This article is based on a part of the doctoral dissertation of the author, written under the supervision of his advisor Prof. Jose F. Fernando.}
\end{center}

\begin{flushright}
Carlos Ueno\footnote{The author is an external collaborator of Spanish MTM2011-22435.} \\
IES La Vega de San Jos\'e \\
35015 Las Palmas de Gran Canaria, SPAIN\\
\verb+cuenjac@gmail.com+
\end{flushright}

\section*{Abstract}
{\small In this note we show that unbounded convex polygons with nonparallel unbounded edges are polynomial images of $\R^2$, whereas their interiors are polynomial images of $\R^3$.}

\section{Introduction}

The study of polynomial images of $\R^n$ has attracted the attention of this author during recent years. Fernando and Gamboa can be considered pioneers in this field; they found some properties that a set on the plane must satisfy in order to be the image set of a polynomial map $f:\R^2\to\R^2$ (\cite{fg1}), as well as some nontrivial examples. Their work is partly based on Jelonek's contribution to the understanding of the set of points where a map is not proper (\cite{j1,j2}). In fact, those results are crucial to prove that the set of infinite points of a polynomial image is connected (see \cite{fu2} for further details). We refer the reader to \cite{fg1,fgu1,fu} for a careful study of potential applications and for a complete picture of the state of the art.

Since finding a full characterization of such polynomial images seems at present a difficult problem, part of our attention has been directed to finding ample families of semialgebraic sets (mainly with piecewise linear boundaries) that can actually be represented `constructively' as images of polynomial maps. Thus, in \cite{u2} we prove that complements of convex polygons and their interiors (with just one exception, that of the region bounded by two parallel lines) are polynomial images of $\R^2$. We also know that the only `open' convex polygons that are polynomial images are open halfplanes and open angles (see \cite[Cor. 3.7]{fg2}). Related results that refer to more general settings can be found in \cite{fgu1,fu}.

In this note we complete our study of the relationship between convex polygons and images of polynomial maps:

\begin{thm}\label{thm:pleg} 
Let $\p\subset\R^2$ be either a half-plane or an unbounded convex polygon with nonparallel unbounded edges. Then $\p$ is the image of a polynomial map $f:\R^2\to\R^2$. 
\end{thm}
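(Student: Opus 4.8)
The plan is to dispose of the half-plane instantly and then reduce the polygonal case to an inductive construction. A closed half-plane is the image of $(s,t)\mapsto(s,t^2)$, so I would set that case aside and concentrate on an unbounded convex polygon $\p$ with two nonparallel unbounded edges. Since linear changes of coordinates are polynomial automorphisms of $\R^2$ and preserve the property of being a polynomial image, I would first normalize $\p$ by an affine transformation. Because the two unbounded edges are nonparallel, the recession cone of $\p$ is a proper angle of opening strictly less than $\pi$; rotating so that $(0,1)$ lies in the interior of this cone, I can assume that every vertical line meets $\p$ in an upward ray, so that
\[
\p=\{(x,y)\in\R^2:\ y\ge g(x)\},\qquad g(x)=\max_{0\le i\le n}(a_ix+b_i),
\]
where $g$ is a convex piecewise-linear function with $a_0<\cdots<a_n$ and $g(x)\to+\infty$ as $x\to\pm\infty$. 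The two outer pieces give the unbounded edges and the $n-1$ interior pieces give the bounded edges.

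Next I would argue by induction on the number of bounded edges. In the base case $\p$ is a closed angle, hence a linear image of the first quadrant $\{u\ge0,\,v\ge0\}=\{(s^2,t^2):(s,t)\in\R^2\}$, and we are done. For the inductive step, observe that dropping the last affine form $a_nx+b_n$ produces a polygon $\p'$ with one fewer bounded edge and $\p=\p'\cap H$, where $H=\{y\ge a_nx+b_n\}$ is a half-plane whose boundary cuts off a neighbourhood of one vertex of $\p'$ and replaces it by a single new bounded edge. By the inductive hypothesis $\p'=\psi'(\R^2)$ for a polynomial map $\psi'$, so it suffices to produce an explicit polynomial \emph{fold} $\Phi:\R^2\to\R^2$ with $\Phi(\p')=\p'\cap H$; then $\Phi\circ\psi'$ realizes $\p$.

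The construction of $\Phi$ is the heart of the matter and the step I expect to be the main obstacle. A smooth polynomial map cannot bend a straight edge into a corner by brute force: as the examples $(s,t)\mapsto(s,t^2)$ and $(s,t)\mapsto(s^2,t^2)$ already show, the edges and vertices of a polynomial image are produced by the critical-value set, that is, by the image of the fold locus $\{\det D\Phi=0\}$, and not by the image of the boundary. I would therefore engineer $\Phi$ so that its critical locus is a union of lines, mapped onto the edge-lines of $\p$, with the vertices appearing as images of the intersection points of these lines and with the folding orientation placing the image on the correct side of each edge. The delicate points are three: ensuring that the newly created edge is a genuine straight segment (a naive quadratic fold across $y=a_nx+b_n$ curls it into a parabolic arc, which must be avoided); proving that $\Phi$ is surjective onto $\p'\cap H$ and does not spill outside it; and checking that the unbounded edges and the recession cone are preserved at infinity, so that no spurious unbounded directions are introduced. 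Verifying that the critical-value set of $\Phi\circ\psi'$ is exactly $\partial\p$, and that the map covers $\p$ without overflow, is where essentially all of the work will lie.
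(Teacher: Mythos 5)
Your normalization and base cases are fine (they essentially coincide with the paper's Steps 1--2), but the inductive step has a genuine gap: the polynomial ``fold'' $\Phi$ with $\Phi(\p')=\p'\cap H$ is never constructed, and you yourself concede that this is where essentially all of the work lies. That map is not a verification to be postponed; it \emph{is} the theorem. Worse, your reduction runs in the direction that makes the problem hardest: you apply the induction hypothesis to the \emph{larger} polygon $\p'\supset\p$ and then must \emph{contract} it onto $\p$, which forces you to control spillover across every edge of $\p$ simultaneously while manufacturing a new vertex out of a straight edge via a critical locus you can only describe wishfully; no candidate formula is given, and it is not even clear such a $\Phi$ exists short of already knowing the theorem. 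A geometric slip compounds this: since $a_n$ exceeds all other slopes, the line $y=a_nx+b_n$ meets $\partial\p'$ in exactly one point, lying on the unbounded edge of slope $a_{n-1}$; so $H$ does not cut off ``a neighbourhood of one vertex'' but an unbounded wedge, and $\Phi$ would have to fold an unbounded region into $\p$ without disturbing anything at infinity.

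The paper sidesteps all of this by running the induction in the expanding direction. It passes to the sub-polygon $\p_1=[\vec{v},p_1,\dots,p_{n-2},\vec{w}]\subset\p$ obtained by deleting the last vertex (one edge fewer, and \emph{contained} in $\p$), applies the induction hypothesis to get $f_0(\R^2)=\p_1$, and then enlarges $\p_1$ onto $\p$ using explicit maps $(x,y)\mapsto\bigl(x,\,y(1+\psi(x)\varphi(x,y))^2\bigr)$, where $\varphi$ is the product of the linear forms defining the polygon and $\psi$ is a degree-one or degree-two polynomial in $x$ (Lemma~\ref{thm:dob}). Such a map preserves the first coordinate and, over a prescribed interval $]c,d[$ of abscissas, stretches each vertical ray of a ``curtain'' down to the $x$-axis while fixing all other fibers; because it only \emph{adds} a vertical strip visibly contained in the target polygon, surjectivity and containment are immediate and no analysis of critical values is needed. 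Three applications of the lemma (fill the triangle $\Tt$ next to the deleted vertex, then sweep a half-strip to obtain $\p$ minus one vertex, then recover that vertex), interleaved with affine repositionings, complete the step. If you want to salvage your plan, reverse its direction: prove a ray-stretching lemma of this type rather than a folding one.
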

\begin{cor}\label{cor:pleg}
Let $\p\subset\R^2$ be an unbounded convex polygon with nonparallel unbounded edges. Then ${\rm Int}(\p)$ is the image of a polynomial map $f:\R^3\to\R^2$.
\end{cor}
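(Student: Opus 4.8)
The plan is to deduce the Corollary from Theorem \ref{thm:pleg} after putting $\p$ in a normal form. Since the two unbounded edges of $\p$ are nonparallel, the recession cone of $\p$ is a salient cone of angular width strictly less than $\pi$; rotating it so that it is bisected by the positive $y$-axis and applying an affine change of coordinates (a polynomial automorphism of $\R^2$, which preserves both the class of polynomial images and the operation $\p\mapsto{\rm Int}(\p)$), we may assume
\[
\p=\{(x,y)\in\R^2: y\ge\gamma(x)\},\qquad \gamma(x)=\max_{1\le i\le r}(a_ix+b_i),
\]
the epigraph of a convex piecewise linear function whose extreme slopes $a_1<\cdots<a_r$ are distinct. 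In these coordinates $(0,1)$ lies in the interior of the recession cone and ${\rm Int}(\p)=\{y>\gamma(x)\}$, so that ${\rm Int}(\p)=\bigcup_{\varepsilon>0}\big(\p+\varepsilon(0,1)\big)$.

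The core of the argument is that this last identity is a \emph{union}, hence well suited to being realized as an image. Recall the standard surjection $(a,b)\mapsto a^2+(ab-1)^2$ of $\R^2$ onto the open ray $(0,+\infty)$. If $\gamma$ were a polynomial, the map
\[
\R^3\longrightarrow\R^2,\qquad (x,a,b)\longmapsto\big(x,\ \gamma(x)+a^2+(ab-1)^2\big)
\]
would have image exactly $\{y>\gamma(x)\}={\rm Int}(\p)$, since for each fixed $x$ its fibre sweeps out $(\gamma(x),+\infty)$. Thus the whole problem reduces to manufacturing, with honest polynomials and with only the one extra variable that separates $\R^2$ from $\R^3$, a map whose fibre over each $x$ is the open ray $(\gamma(x),+\infty)=\bigcap_{i}(\,a_ix+b_i,\,+\infty)$. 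When $r\le2$ this fibre is cut out by at most two strict inequalities and ${\rm Int}(\p)$ is an open angle (or an open half-plane), which is a polynomial image of $\R^2$ by \cite[Cor. 3.7]{fg2}, so the Corollary is immediate. In general one writes ${\rm Int}(\p)=A\cap H_1\cap\cdots\cap H_m$, where $A$ is the open angle spanned by the two unbounded edges and the $H_j=\{y>a_{i_j}x+b_{i_j}\}$ are the open half-planes attached to the bounded edges, feeds the parametrization $A=\psi(\R^2)$ into the construction above, and uses the extra coordinate to push the finitely many bounded edges simultaneously into the interior by means of the surjection onto $(0,+\infty)$ and the convexity of $\p$.

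The main obstacle is precisely this last intersection step. Realizing a set as a polynomial image is an inherently \emph{existential} (union over parameters) operation, whereas the defining conditions $y>a_ix+b_i$ are \emph{universal} (an intersection of half-planes); this mismatch is exactly the phenomenon that makes even the open quadrant delicate. The technical heart of the proof is therefore to arrange the single additional variable so that it enforces all of these strict inequalities at once without attaining any of them, keeping the total number of variables equal to three. Here convexity is decisive: in the epigraph normalization every defining inequality has the form $y-(a_ix+b_i)>0$, so $(0,1)$ is a common recession direction that raises all of them simultaneously, which is what makes a uniform, boundary-avoiding choice of the shift possible. Once such a map $F:\R^3\to\R^2$ with $F(\R^3)={\rm Int}(\p)$ is produced, composing with the inverse of the initial affine normalization proves the Corollary.
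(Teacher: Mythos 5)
Your outline begins along the same lines as the paper's actual proof: normalize so that $(0,1)$ is an interior recession direction, observe that ${\rm Int}(\p)=\bigcup_{\varepsilon>0}\big(\p+\varepsilon(0,1)\big)$, and plan to feed in the map $f_0:\R^2\to\R^2$ with $f_0(\R^2)=\p$ supplied by Theorem~\ref{thm:pleg}. But the argument is not complete: the step you yourself call ``the technical heart'' --- producing the strictly positive shift with only one variable beyond the two consumed by $f_0$ --- is exactly where you stop, and it is the whole difficulty. Your surjection $(a,b)\mapsto a^2+(ab-1)^2$ onto $(0,+\infty)$ needs two variables of its own, so the map it naturally yields, $(u,v,a,b)\mapsto f_0(u,v)+\big(a^2+(ab-1)^2\big)(0,1)$, exhibits ${\rm Int}(\p)$ as a polynomial image of $\R^4$, not of $\R^3$. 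Nor can the single spare variable produce the shift on its own: a one-variable polynomial $\varepsilon(t)$ that is everywhere positive attains a positive minimum $m$, and then the image of $(u,v,t)\mapsto f_0(u,v)+\varepsilon(t)(0,1)$ is the closed translate $\p+m(0,1)$, which misses the points of ${\rm Int}(\p)$ lying lower than $m$ above $\partial\p$; while if $\varepsilon$ vanishes somewhere or takes negative values, the image leaks onto or across $\partial\p$. So the shift must be entangled with the variables feeding $f_0$, and your proposal never explains how. The alternative you sketch for the general case --- parametrize the open angle $A$ via \cite[Cor.\ 3.7]{fg2} and ``push the bounded edges into the interior'' with the extra coordinate --- is not a construction, and it points the wrong way: upward translates of $A$ fill ${\rm Int}(A)$, not ${\rm Int}(\p)$, so any such pushing has to factor through $\p$ itself, i.e.\ through $f_0$; the intersection decomposition ${\rm Int}(\p)=A\cap H_1\cap\dots\cap H_m$ plays no role in a working argument.

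The device that closes the gap --- and it is the paper's opening move --- is the fact that the open half-plane $\R\times(0,+\infty)$ is a polynomial image of $\R^2$ (\cite[1.4(iv)]{fg1}), say $g(\R^2)=\R\times(0,+\infty)$. Then ${\rm id}_\R\times g:\R^3=\R\times\R^2\to\R^3$ has image $\R^2\times(0,+\infty)$, and composing with $f_1(x,y,t):=f_0(x,y)+t(0,1)$ gives a polynomial map $\R^3\to\R^2$ whose image is $\bigcup_{t>0}\big(\p+t(0,1)\big)={\rm Int}(\p)$; the paper does exactly this, with the unbounded edges placed on the coordinate axes and shift direction $(1,1)$, while your normalization with direction $(0,1)$ works equally well. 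The point of the half-plane result is that it lets two variables do double duty, simultaneously producing one unconstrained real coordinate and one strictly positive coordinate; that is precisely what your variable count is missing, and without it (or an equivalent substitute) your construction does not fit in $\R^3$.
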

As commented above, the previous corollary can only be improved to a polynomial map $f:\R^2\to\R^2$ when $\p$ is either and open half-plane or an open angle. Thus, the previous results provide the full picture concerning the representation of unbounded convex polygons and their interiors as polynomial images.

To simplify our terminology, we reserve the term \em {\tt V}-polygon \em for a polygon which satisfies the properties stated in Theorem~\ref{thm:pleg} (we get a {\tt V} shape by extending its unbounded edges till they meet). As it is already known, bounded polygons or convex polygons with parallel unbounded edges, together with their interiors, cannot be polynomial images of the plane \cite[Remark 1.3]{fg1}. 

Recall that a map $f:\R^2\to\R^2$ is \em proper \em when $f^{-1}(K)$ is a compact set whenever $K$ itself is compact. Polynomial maps are not always proper maps, and simple examples of this fact are constant maps or linear projections. Proper maps are nice in the sense that they prevent the collapse of points at infinity. Our proof of Theorem~\ref{thm:pleg} involves polynomial maps which are not proper, and an interesting challenge is the following: \em Can any {\tt V}-polygon be represented as the image of a proper polynomial map $f:\R^2\to\R^2$?\em

\section{Preliminaries and main tool}

We first establish the basic notions and notations that we need. We denote the segment or edge joining points $p,q\in\R^2$ by $pq$. If we are given a point $p$ and a vector $\vec{v}$ of the plane, the (closed) ray with origin $p$ and direction $\vec{v}$ is denoted by $p\vec{v}$. If an unbounded convex polygon $\p$ with $n$ edges has consecutive vertices $p_1$,\dots, $p_{n-1}$, with unbounded edges given by rays $p_1\vec{v}$, $p_2\vec{w}$, then we write $\p$ as  $[\vec{v},p_1,\dots,p_{n-1},\vec{w}]$. We also use the fact that a convex polygon $\p$ can be represented as a finite intersection of halfplanes given by one-degree inequalities $\ell_i(x,y)\ge 0$, where the equations $\ell_i=0$ correspond to the lines containing the edges of $\p$ (for more on convex polygons and polyhedra see \cite[Ch.12]{ber2}). We use the notations $[a,b]$ and $]a,b[$ for respectively closed and open intervals in $\R$.

Although the following notions extend naturally to higher dimensions, here we restrict our attention to the 2-dimensional setting. A \em polynomial map \em is a map $f=(f_1,f_2):\R^2\to\R^2$ where $f_1$ and $f_2$ are (real) polynomial functions on $\R^2$. A \em polynomial image \em is the image of a polynomial map. A \em basic semialgebraic set \em $\Ss$ in $\R^2$ is a set that can be described as follows:
\[
\Ss:=\{(x,y)\in\R^2: g_1(x,y)*0,\dots,g_k(x,y)*0\},
\]
where the $g_i\in\R[x,y]$ and each $*\in\{>,\ge\}$. The finite collection $\mathfrak G_\Ss:=\{g_1,\dots,g_k\}$ will be called a \em defining family for $\Ss$\em. Of course, a defining family $\mathfrak G_\Ss$ for $\Ss$ is not unique. 

It is also the case that, given a point $p$ in the boundary $\partial \Ss$ of $\Ss$, we must have $g_i(p)=0$ for some polynomial $g_i\in\mathfrak G_\Ss$. A relevant family of basic semialgebraic sets are convex polygons, which constitute our main interest in this paper.
\begin{defn}
A \emph{curtain} is a basic semialgebraic set $\Ss$ such that:
\begin{enumerate}
\item[{\rm(i)}] $\Ss\subset\{y\ge 0\}$.
\item[{\rm(ii)}] $\Ss_r:=\Ss\cap\{x=r\}$ is either the emptyset or a (closed or open) ray for each $r\in\R$.\end{enumerate}
\end{defn}
Notice that each ray $\Ss_r$ in a curtain $\Ss$ has the form $\{r\}\times\langle s_r,+\infty[$ where $s_r\ge 0$ and the symbol $\langle$ represents either $]$ or $[$. It is also the case that, after a suitable affine change of coordinates, every $\tt V$-polygon $\p$ can be assumed to be a curtain. We also have

\begin{lem}\label{fupol}
If a $\tt V$-polygon $\p$ lying on the upper half-plane $\{y\ge 0\}$ has unbounded edges $p_1\vec{v}$, $p_{n-1}\vec{w}$ where $\vec{v}=(\alpha_1,\beta_1)$ and $\vec{w}=(\alpha_2,\beta_2)$ satisfy $\alpha_1\le0\le \alpha_2$, then $\p$ is a curtain.
\end{lem}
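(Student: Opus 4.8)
\section*{Proof plan for Lemma~\ref{fupol}}

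The plan is to verify the two defining properties of a curtain directly. Property (i) is immediate, since by hypothesis $\p\subset\{y\ge0\}$; moreover a convex polygon is a finite intersection of closed half-planes $\ell_i\ge0$, so $\p$ is automatically a basic semialgebraic set. The entire content therefore lies in property (ii): that every vertical slice $\p_r:=\p\cap\{x=r\}$ is empty or an upward ray. First I would record two elementary structural facts. Since $\p$ is convex, each slice $\p_r$ is the intersection of $\p$ with a vertical line, hence a convex subset of that line, i.e. an interval (possibly empty, a point, bounded, or a ray); and since $\p\subset\{y\ge0\}$, each such interval is bounded below. Thus it suffices to show that whenever $\p_r\ne\varnothing$ it is unbounded above, for a nonempty convex subset of a line that is bounded below and contains an upward ray is exactly a ray $\{r\}\times\langle s_r,+\infty[$ with $s_r\ge0$, which is the required form.

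The reduction is therefore to proving that the upward direction $(0,1)$ is a \emph{recession direction} of $\p$, i.e. $p+t(0,1)\in\p$ for every $p\in\p$ and every $t\ge0$: this forces each nonempty $\p_r$ to contain an upward ray and completes property (ii). For this I would argue as follows. The two unbounded edges $p_1\vec v$ and $p_{n-1}\vec w$ are rays contained in the closed convex set $\p$, so by the standard characterization of recession directions of a closed convex set both $\vec v$ and $\vec w$ are recession directions; consequently every nonnegative combination $s\vec v+t\vec w$ with $s,t\ge0$ is also a recession direction. It then remains to check the purely linear-algebraic claim that $(0,1)\in\mathrm{cone}(\vec v,\vec w)$.

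To establish this claim I would use the hypotheses together with one further observation: since $\p\subset\{y\ge0\}$, its recession cone is contained in $\{y\ge0\}$, whence $\beta_1,\beta_2\ge0$; the nonparallel hypothesis on the unbounded edges rules out $\beta_1=\beta_2=0$ (two horizontal directions); and by assumption $\alpha_1\le0\le\alpha_2$. One then solves $s(\alpha_1,\beta_1)+t(\alpha_2,\beta_2)=(0,1)$ in nonnegative $s,t$: the first-coordinate equation $s\alpha_1+t\alpha_2=0$ admits a nonnegative solution precisely because $\alpha_1$ and $\alpha_2$ have opposite signs, and the second coordinate $s\beta_1+t\beta_2$ can then be normalized to $1$ because $\beta_1,\beta_2\ge0$ are not both zero.

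I expect the main obstacle to be the careful bookkeeping of the degenerate and boundary cases in this last step, namely $\alpha_1=0$, $\alpha_2=0$, or the vanishing of one of $\beta_1,\beta_2$ (a horizontal unbounded edge, which is still admissible provided the other edge is not horizontal). In each configuration one must exhibit a genuinely nonnegative pair $(s,t)$, and it is exactly here that the nonparallel hypothesis is indispensable, since in the excluded case $\beta_1=\beta_2=0$ the direction $(0,1)$ would not lie in $\mathrm{cone}(\vec v,\vec w)$. Once $(0,1)$ is confirmed to be a recession direction in all cases, property (ii) follows at once and $\p$ is a curtain.
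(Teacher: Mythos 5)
Your proof is correct. Note that the paper states Lemma~\ref{fupol} without any proof at all, treating it as an elementary observation about convex polygons, so there is no argument in the paper to compare yours against; your write-up is a valid way of filling in that gap. The structure is the natural one: property (i) and basic semialgebraicity are immediate (a convex polygon is a finite intersection of half-planes $\ell_i\ge0$, as the paper itself recalls), each nonempty vertical slice is a convex subset of a line bounded below by $0$, and the whole content is reduced to showing that $(0,1)$ is a recession direction. Your verification that $(0,1)\in\mathrm{cone}(\vec{v},\vec{w})$ handles the delicate points correctly: $\beta_1,\beta_2\ge0$ follows from $\p\subset\{y\ge0\}$, the hypothesis $\alpha_1\le0\le\alpha_2$ makes the first-coordinate equation solvable with $s,t\ge0$ (with the degenerate cases $\alpha_1=0$ or $\alpha_2=0$ forcing $\beta_1>0$ resp. $\beta_2>0$, so one may take the other coefficient to be $0$), and the nonparallelism of the unbounded edges is used exactly where it must be, to exclude $\beta_1=\beta_2=0$. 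One small point worth making explicit if you write this up fully: the standard fact that a ray contained in a closed convex set yields a recession direction requires $\p$ closed, which holds since $\p$ is a finite intersection of closed half-planes; for the paper's follow-up remark about deleting vertices and edges of $\p$, the same argument still works by applying it to the closure.
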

Lemma \ref{fupol} remains valid for any set $\p'$ which is obtained by deleting some vertices and/or edges of the $\tt V$-polygon $\p$. 

We will carry out the proof of Theorem~\ref{thm:pleg} by using induction on the number of edges of the polygon. We first state a result which constitutes our main tool to ``fold'' plane regions in a convenient way, by means of polynomial maps. Along our exposition, we frequently substitute a polygon by an affinely equivalent one, more suitable to our purposes; this will not harm our arguments. 

\begin{lem}\label{thm:dob} 
Let $\Ss$ be a curtain with defining family $\mathfrak G_\Ss$, together with numbers $-\infty\le c<d\le +\infty$. Let $\pi:\R^2\to\R$ be the projection $(x,y)\mapsto x$ and $\mathfrak G'_\Ss$ the subset of polynomials $\mathfrak G_\Ss$ which are not divisible by $(x-r)$ for each $r\in\ ]c,d[$. Consider the polynomials 
$$
\varphi(\x,\y)=\prod_{g_i\in\mathfrak G'_\Ss}g_i(\x,\y)\quad\text{and}\quad h(\x,\y)=\y(1+\psi(\x)\varphi(\x,\y))^2\in\R[\x,\y],
$$
where
$$
\psi(\x)=\begin{cases}
(\x-c)(\x-d),&\text{if $c,d\in\R$}\\
(\x-d), &\text{if $c={-\infty}$}\\
(c-\x), &\text{if $d=+\infty$}.
\end{cases}
$$
Then the map $f:\R^2\to\R^2,\, (x,y)\mapsto(x,h(x,y))$ satisfies
$$
f(\Ss_r)=\begin{cases}
\Ss_r, &\text{if $r\in\pi(\Ss)\setminus\ ]c,d[$}\\
\{r\}\times[0, +\infty[, &\text{if $r\in\ ]c,d[\ \cap\ \pi(\Ss)$}.
\end{cases}
$$
In particular, $f(\Ss)=\Ss\cup((]c,d[\ \cap\ \pi(\Ss)) \times[0,+\infty[)$. 
\end{lem}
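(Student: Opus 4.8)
The plan is to exploit that $f$ preserves the abscissa, so that the computation of $f(\Ss)$ splits into a column-by-column analysis. Fixing $r\in\pi(\Ss)$ and writing $h_r(\y):=h(r,\y)=\y(1+\psi(r)\varphi(r,\y))^2$, one has $f(\Ss_r)=\{r\}\times h_r(\langle s_r,+\infty[)$, where $\Ss_r=\{r\}\times\langle s_r,+\infty[$. Since $h_r(\y)$ is $\y$ times a square, $h_r(\y)\ge0$ for all $\y\ge0$, and $h_r$ is continuous with $h_r(\y)\to+\infty$ as $\y\to+\infty$. The argument is then steered by two sign facts: as each $g_i$ is nonnegative on $\Ss$, the product $\varphi$ is nonnegative on $\Ss$, whence $\varphi(r,\y)\ge0$ for every $(r,\y)\in\Ss_r$; and a glance at the three branches defining $\psi$ gives $\psi(r)>0$ for $r\notin[c,d]$, $\psi(r)=0$ for $r\in\{c,d\}$, and $\psi(r)<0$ exactly for $r\in\ ]c,d[$.

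Next I would isolate the behaviour of $\varphi$ on a single column, which is where the passage from $\mathfrak G_\Ss$ to $\mathfrak G'_\Ss$ does its work. Since $(r,s_r)\in\partial\Ss$, some element of $\mathfrak G_\Ss$ vanishes there; the defining polynomial carrying the floor of the curtain through this corner does not vanish identically on the vertical line $\{\x=r\}$ (it is positive just above the floor), hence is divisible by no $(\x-r)$ and so lies in $\mathfrak G'_\Ss$, giving $\varphi(r,s_r)=0$. Moreover, for $r\in\ ]c,d[$ no factor of $\varphi$ can vanish identically on $\{\x=r\}$ (that would force divisibility by $(\x-r)$ with $r\in\ ]c,d[$, excluded by the very definition of $\mathfrak G'_\Ss$), while that floor factor restricts to a nonconstant one-variable polynomial that is nonnegative on the ray; consequently $\varphi(r,\y)\to+\infty$ as $\y\to+\infty$ along such a column.

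With these preparations the two cases are short. If $r\notin\ ]c,d[$ then $\psi(r)\ge0$, so $\psi(r)\varphi(r,\y)\ge0$ on $\Ss_r$, whence $(1+\psi(r)\varphi(r,\y))^2\ge1$ and $h_r(\y)\ge\y$; combined with $h_r(s_r)=s_r$ (which also holds trivially when $\psi(r)=0$, since then $h_r=\mathrm{id}$), the continuity of $h_r$, the limit $h_r\to+\infty$, and the intermediate value theorem, this yields $h_r(\langle s_r,+\infty[)=\langle s_r,+\infty[$ with the same open/closed type, i.e. $f(\Ss_r)=\Ss_r$. If instead $r\in\ ]c,d[\cap\pi(\Ss)$ then $\psi(r)<0$; here $1+\psi(r)\varphi(r,s_r)=1>0$ while $1+\psi(r)\varphi(r,\y)\to-\infty$, so by the intermediate value theorem there is $\y^\ast>s_r$ with $1+\psi(r)\varphi(r,\y^\ast)=0$ and hence $h_r(\y^\ast)=0$. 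As $h_r\ge0$, is continuous, vanishes at $\y^\ast$ and diverges to $+\infty$, its image on $[\y^\ast,+\infty[\subseteq\Ss_r$ is already $[0,+\infty[$; thus $f(\Ss_r)=\{r\}\times[0,+\infty[$, regardless of whether $\Ss_r$ was closed or open (since $\y^\ast>s_r$).

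Finally I would assemble the columns: $f(\Ss)=\bigcup_{r\in\pi(\Ss)}f(\Ss_r)$ equals $\big(\Ss\cap\{\x\notin\ ]c,d[\}\big)\cup\big((\,]c,d[\cap\pi(\Ss))\times[0,+\infty[\big)$, and since $\Ss\cap\{\x\in\ ]c,d[\}$ is contained in $(\,]c,d[\cap\pi(\Ss))\times[0,+\infty[$, this simplifies to $\Ss\cup\big((\,]c,d[\cap\pi(\Ss))\times[0,+\infty[\big)$, which is the asserted ``in particular'' identity. I expect the only genuinely delicate point to be the two analytic claims about $\varphi$ along a column---its vanishing on the floor and its divergence up each column over $]c,d[$---for these are exactly what force the definition of $\mathfrak G'_\Ss$ (discarding the factors $(\x-r)$ with $r\in\ ]c,d[$); the remaining effort is the routine but necessary bookkeeping of the open/closed type of each ray.
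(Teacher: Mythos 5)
Your proposal mirrors the paper's own proof almost step for step: the same column\-/by\-/column reduction $f(\Ss)=\bigcup_{r\in\pi(\Ss)}f(\Ss_r)$, the same sign analysis of $\psi$ and $\varphi$, the same auxiliary polynomial $\gamma_r(t)=1+\psi(r)\varphi(r,t)$ with the two cases $r\notin\,]c,d[$ and $r\in\,]c,d[$, and the same intermediate-value endgame (your bookkeeping of the open/closed type of each ray is in fact slightly more careful than the paper's). The one place where you go beyond the paper is that you try to \emph{prove} the crucial claim $\varphi(r,s_r)=0$, which the paper simply asserts (``for each $r\in\pi(\Ss)$ there is a polynomial $g_i\in\mathfrak G'_\Ss$ such that $g_i(r,s_r)=0$''). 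Your justification contains a quantifier slip: from the fact that the polynomial carrying the floor at $(r,s_r)$ does not vanish identically on $\{x=r\}$ you may conclude only that it is not divisible by $x-r$ \emph{for this particular} $r$, whereas membership in $\mathfrak G'_\Ss$ requires non-divisibility by $x-r'$ for \emph{every} $r'\in\,]c,d[$. A floor polynomial can vanish at $(r,s_r)$, be nonzero elsewhere on $\{x=r\}$, and still be divisible by $x-r'$ for a different $r'\in\,]c,d[$; it is then discarded from $\varphi$, nothing forces $\varphi(r,s_r)=0$, and both of your cases collapse (each hinges on $\gamma_r(s_r)=1$, i.e.\ $h_r(s_r)=s_r$), as does your divergence argument for $\varphi(r,\cdot)$, which needs the floor factor to occur among the factors of $\varphi$.

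This is not a removable difficulty, because the statement itself fails for such defining families. Take $\Ss=\{y\ge0,\ (x-1)^2(y-1)\ge0\}$, a curtain whose columns are $\{r\}\times[1,+\infty[$ for $r\ne1$ and $\{1\}\times[0,+\infty[$, with $c=0$ and $d=2$. Then $\mathfrak G'_\Ss=\{y\}$, $\varphi=y$, $\psi(x)=x(x-2)$, and for $r=3\notin\,]c,d[$ one gets $h_3(t)=t(1+3t)^2$, which is increasing on $[1,+\infty[$; hence $f(\Ss_3)=\{3\}\times[16,+\infty[\ \ne\Ss_3$, contradicting the first case of the conclusion. To be fair, you have reproduced a gap that is already present in the paper: its proof rests on exactly the same unproved assertion, which is really an implicit extra hypothesis on the pair $(\mathfrak G_\Ss,\,]c,d[\,)$, namely that the foot of every column is a zero of some polynomial surviving in $\mathfrak G'_\Ss$. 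That hypothesis holds trivially in every application of the lemma in the proof of Theorem~\ref{thm:pleg} (there the defining polynomials are the degree-one equations of the edges, and the only polynomial ever discarded, $\ell''_n=-x$ in Step~5, is never the only one vanishing at the foot of a column), so the main theorem is unaffected; but neither the paper's proof nor yours establishes the lemma as literally stated, and your attempt to fill the hole is where the invalid inference becomes explicit.
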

\begin{proof} Since $\Ss=\bigcup_{r\in\pi(\Ss)}\Ss_r$, we have $f(\Ss)=\bigcup_{r\in\pi(\Ss)}f(\Ss_r)$. Recall that we write $\Ss_r:=\{r\}\times\langle s_r,+\infty[$ where $s_r\geq0$ and $\langle$ represents either $]$ or $[$, depending on $\Ss_r$ being an open or closed ray respectively. It is clear that $f(\Ss_r)\subset\{r\}\times[0,+\infty[$. For each $r\in\pi(\Ss)$ there is a polynomial $g_i\in\mathfrak G'_\Ss$ such that $g_i(r,s_r)=0$, and therefore $\varphi(r,s_r)=0$. We consider now the polynomials
$$
\gamma_r(\t):=1+\psi(r)\varphi(r,\t)\ \text{ and }\ h_r(\t):=h(r,\t)=\gamma_r^2(\t)\t,
$$
and realize that, since $\varphi(r,s_r)=0$, we have $\gamma_r(s_r)=1$ and $h_r(s_r)=s_r$ for $r\in\pi(\Ss)$. We distinguish two cases:
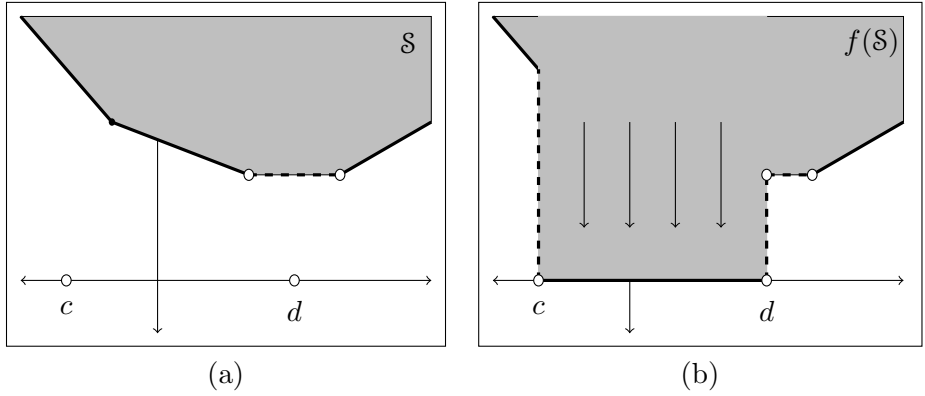
\begin{figure}[ht]
\begin{center}
\scalebox{1}{
\begin{tabular}{cc}
\begin{tikzpicture}[yscale=0.7,xscale=0.6,framed]
\draw[<->] (-3,0)--(6,0);
\draw[<->] (0,-1)--(0,5);
\draw[fill=lightgray, very thin, opacity=1] (-3,5)--(-1,3)--(2,2)--(4,2)--(6,3)--(6,5)--cycle;
\draw[very thick] (-3,5)--(-1,3)--(2,2)(4,2)--(6,3);
\draw[very thick,dashed] (2,2)--(4,2);
\fill [black,opacity=1] (-1,3) circle (2pt);
\fill [black,opacity=1] (2,2) circle (2pt);
\fill [black,opacity=1] (4,2) circle (2pt);
\path (5.5,4.5) node{$\Ss$};
\fill [white,draw=black,opacity=1] (2,2) circle (3pt);
\fill [white,draw=black,opacity=1] (4,2) circle (3pt);
\fill [white,draw=black,opacity=1] (-2,0) circle (3pt);
\fill [white,draw=black,opacity=1] (3,0) circle (3pt);
\path (-2,-0.2) node[below]{$c$} (3,-0.2) node[below]{$d$};
\end{tikzpicture}&
\begin{tikzpicture}[yscale=0.7,xscale=0.6,framed]
\draw[<->] (-3,0)--(6,0);
\draw[<->] (0,-1)--(0,5);
\draw[fill=lightgray, very thin, opacity=1] (-3,5)--(-1,3)--(2,2)--(4,2)--(6,3)--(6,5)--cycle;
\draw[very thick] (-3,5)--(-1,3)--(2,2)(4,2)--(6,3);
\draw[very thick, dashed] (3,2)--(4,2);
\fill [black,opacity=1] (-1,3) circle (2pt);
\fill [black,opacity=1] (2,2) circle (2pt);
\fill [black,opacity=1] (4,2) circle (2pt);
\draw[fill=lightgray, color=lightgray, opacity=1] (-2,5)--(-2,0)--(3,0)--(3,5)--cycle;
\draw[very thick,dashed] (-2,0)--(-2,4)(3,0)--(3,2);
\draw[very thick] (-2,0)--(3,0);
\draw[thin,->](-1,3)--(-1,1);
\draw[thin,->](0,3)--(0,1);
\draw[thin,->](1,3)--(1,1);
\draw[thin,->](2,3)--(2,1);
\fill [white,draw=black,opacity=1] (3,2) circle (3pt);
\fill [white,draw=black,opacity=1] (4,2) circle (3pt);
\fill [white,draw=black,opacity=1] (-2,0) circle (3pt);
\fill [white,draw=black,opacity=1] (3,0) circle (3pt);
\path (-2,-0.2) node[below]{$c$} (3,-0.2) node[below]{$d$};
\path (5.3,4.5) node{$f(\Ss)$};
\end{tikzpicture}\\
(a)&(b)
\end{tabular}}
\caption{{\small Effect of the map $f$ on a basic semialgebraic set $\Ss$.}}\label{fig:ori1}
\end{center}
\end{figure}

\vskip 0.2cm
\noindent{\sc Case 1}. If $r\in\pi(\Ss)\setminus\,]c,d[$, then $h_r(\t)$ is a polynomial of odd degree and $h_r(t)\ge t$ for each $t\in\langle s_r,+\infty[$. This is so because in this case $\psi(r)\geq0$, and from $(r,t)\in\Ss$ we obtain $g_i(r,t)\geq0$ for each $g_i\in\mathfrak G_\Ss$, so that also $\varphi(r,t)\geq0$. Therefore,
\[
h_r(t)=t(1+\psi(r)\varphi(r,t))^2\ge t.
\]
Hence, we must have $f(\Ss_r)=\Ss_r$, since $\lim_{t\to+\infty}h_r(t)=+\infty$ and $h_r(s_r)=s_r$.
\vskip 0.2cm
\noindent{\sc Case 2}. If $r\in\pi(\Ss)\,\cap\,]c,d[$, the polynomial $\gamma_r(\t)=1+\psi(r)\varphi(r,\t)$ is not constant. Otherwise $\varphi_r(t):=\varphi(r,\t)$ is also constant, and this implies that either some $g_i(r,\t)$ is zero or each $g_j(r,\t)$ is a nonzero constant polynomial. The first option takes place if $(\x-r)$ divides $g_i(\x,\y)$, and this cannot happen whereas the second implies that $\Ss_r=\R$, and this is ruled out in our hypothesis.

Now, in this case the main coefficient of $\gamma_r(\t)$ is negative because $\psi(r)<0$ and the main coefficient of $\varphi_r(\t)$ is positive, since each restriction $g_i|_{\Ss_r}$ is nonnegative. Therefore $\lim_{t\to+\infty}\gamma_r(t)={-\infty}$ and, since $\gamma_r(s_r)=1$, there is $t_0\in\ ]s_r,+\infty[$ with $\gamma_r(t_0)=0$ and consequently $h_r(t_0)=0$. But we also have $h_r(t)\geq0$ for each $t\in\langle s_r,+\infty[$ and $\lim_{t\to+\infty}h_r(t)=+\infty$, so we conclude that $f(\Ss_r)=\{r\}\times[0,+\infty[$.
\end{proof}

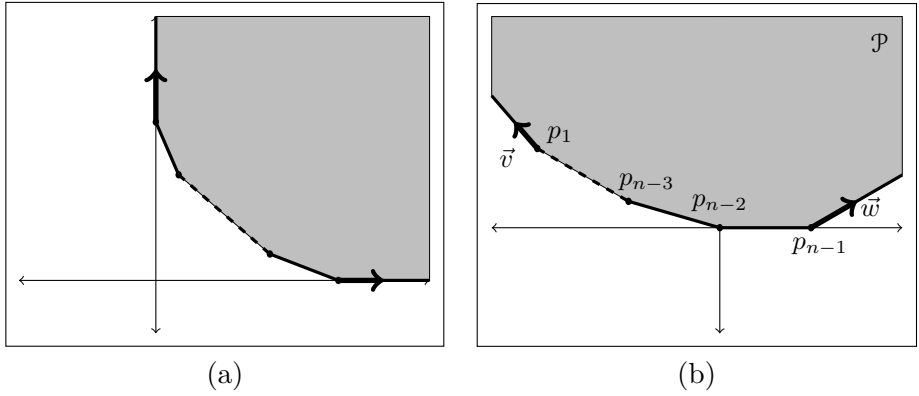
\begin{figure}[t]
\begin{center}
\scalebox{1}{
\begin{tabular}{cc}
\begin{tikzpicture}[yscale=0.7,xscale=0.6,framed]
\draw[<->] (-3,0)--(6,0);
\draw[<->] (0,-1)--(0,5);
\draw[fill=lightgray, very thin] (0,5)--(0,3)--(0.5,2)--(2.5,0.5)--(4,0)--(6,0)--(6,5)--cycle;
\draw[very thick] (0,5)--(0,3)--(0.5,2) (2.5,0.5)--(4,0)--(6,0);
\draw[very thick,dashed] (0.5,2) -- (2.5,0.5);
\fill [black,opacity=1] (4,0) circle (2pt);
\fill [black,opacity=1] (0.5,2) circle (2pt);
\fill [black,opacity=1] (2.5,0.5) circle (2pt);
\fill [black,opacity=1] (0,3) circle (2pt);
\draw[->,line width=2pt] (4,0)-- (5,0);
\draw[->,line width=2pt] (0,3)-- (0,4);
\end{tikzpicture}&
\begin{tikzpicture}[yscale=0.7,xscale=0.6,framed]
\draw[<->] (-5,0)--(4,0);
\draw[<->] (0,-2)--(0,4);
\draw[fill=lightgray, very thin] (-5,2.5)--(-4,1.5)--(-2,0.5)--(0,0)--(2,0)--(4,1)--(4,4)--(-5,4)--cycle;
\draw[very thick] (-5,2.5)--(-4,1.5)(-2,0.5)--(0,0)--(2,0)--(4,1);
\draw[very thick,dashed] (-4,1.5)--(-2,0.5);
\fill [black,opacity=1] (-4,1.5) circle (2pt);
\fill [black,opacity=1] (-2,0.5) circle (2pt);
\fill [black,opacity=1] (0,0) circle (2pt);
\fill [black,opacity=1] (2,0) circle (2pt);
\draw[->,line width=2pt] (2,0)-- (3,0.5);
\draw[->,line width=2pt] (-4,1.5)--node[below left]{\small $\vec{v}$} (-4.5,2);
\path (-4,1.4) node[above right]{\small $p_1$}-- (-1.6, 0.5) node[above] {\small $p_{n-3}$}-- (0,0.1) node[above] {\small $p_{n-2}$}--(2.2,0) node[below]{\small $p_{n-1}$} (3.3,0.4) node{\small $\vec{w}$};
\path (3.5,3.5) node{\small $\p$};
\end{tikzpicture}\\
(a)&(b)
\end{tabular}}
\caption{{\small Setting up the initial position of our {\tt V}-polygon $\p$.}}\label{fig:ori1}
\end{center}
\end{figure}

\section{Proofs of Theorem~\ref{thm:pleg} and Corollary~\ref{cor:pleg}}

\begin{proof}[Proof of Theorem~\em\ref{thm:pleg}]
As we already mentioned we use induction on the number $n$ of edges of the {\tt V}-polygon $\p$. We conduct the proof in a sequence of steps:

\vspace{1mm}
\noindent{\bf Step 1.} \em Initial steps of the inductive procedure\em. {\tt V}-polygons with just one edge are in fact closed halfplanes in $\R^2$ and we are reduced to show that $\p:=\{y\ge 0\}$ is a polynomial image; take for instance $(x,y)\mapsto(x,y^2)$. For $n=2$ it is enough to show that the closed quadrant $\p:=\{x\ge0,\ y\ge0\}$ is a polynomial map; take for instance $(x,y)\mapsto(x^2,y^2)$. 

\vspace{1mm}
\noindent{\bf Step 2.} \em Preparation of the inductive step\em. Let us consider now a {\tt V}-polygon $\p:=[\vec{v},p_1,\dots,p_{n-1},\vec{w}]$ with $n>2$ edges, which has at least two vertices. After an affine change of coordinates we can assume that $\p$ is a curtain and:
\begin{enumerate}
\item[(i)] If $p_i=(a_i,b_i)$ then $a_1<\cdots<a_{n-2}=0<a_{n-1}$ and $b_{n-2}=b_{n-1}=0$.
\item[(ii)] If $\vec{v}=(\alpha_1,\beta_1)$ and $\vec{w}=(\alpha_2,\beta_2)$, then $\alpha_1<0<\alpha_2$ and $\beta_1,\beta_2>0$.
\end{enumerate}
To that end, we apply an affine change of coordinates that places $\p$ in the first quadrant, so that its unbounded edges are contained in the axes, and then apply an isometry that takes the vertex $p_{n-2}$ into the origin and the edge $p_{n-2}p_{n-1}$ on the axis $\{y=0\}$ in such a way that $\p\subset\{y\ge 0\}$. Note that, when situated in this form, none of the edges of $\p$ is contained in a vertical line (see Figure~\ref{fig:ori1}(b)). 

\begin{figure}[h]
\begin{center}
\scalebox{1}{
\begin{tabular}{cc}
\begin{tikzpicture}[yscale=0.744,xscale=0.6,framed]
\draw[<->] (-5,0)--(4,0);
\draw[<->] (0,-2)--(0,4);
\draw[fill=lightgray, very thin] (-5,2.5)--(-4,1.5)--(-2,0.5)--(0,0)--(4,2)--(4,4)--(-5,4)--cycle;
\draw[very thick] (-5,2.5)--(-4,1.5)(-2,0.5)--(0,0);
\draw[very thick,dashed] (-4,1.5)--(-2,0.5);
\draw[very thick] (0,0)--(4,2);
\draw[->,line width=2pt] (0,0)-- (1,0.5)node[above]{\small $\vec{w}$};
\fill [black,opacity=1] (-4,1.5) circle (2pt);
\fill [black,opacity=1] (-2,0.5) circle (2pt);
\fill [black,opacity=1] (0,0) circle (2pt);
\fill [black,opacity=1] (2,0) circle (2pt);
\draw[dashed] (0,0)--(2,0)--(4,1);
\draw[->,line width=2pt] (-4,1.5)--node[below left]{\small $\vec{v}$} (-4.5,2);
\path (-4,1.4) node[above right]{\small $p_1$}-- (-1.6, 0.5) node[above] {\small $p_{n-3}$}-- (-0.1,0.1) node[above] {\small $p_{n-2}$}--(2.2,0) node[below]{\small $p_{n-1}$};
\path (3.5,3.5) node{\small $\p_1$};
\end{tikzpicture}
&
\begin{tikzpicture}[yscale=0.744,xscale=0.6,framed]
\draw[<->] (-5,0)--(4,0);
\draw[<->] (0,-2)--(0,4);
\draw[fill=lightgray,very thin] (-5,2.5)--(-4,1.5)--(-2,0.5)--(0,0)--(2,0)--(2,1)--(4,2)--(4,4)--(-5,4)--cycle;
\draw[very thick] (-5,2.5)--(-4,1.5)(-2,0.5)--(0,0)--(4,2);
\draw[very thick,dashed] (-4,1.5)--(-2,0.5);
\draw[very thick,dotted] (2,0)--(2,1);
\draw[thin,dotted] (0,0)--(0,4) (2,1)--(2,4);
\draw[dashed] (2,0)--(4,1);
\fill [black,opacity=1] (-4,1.5) circle (2pt);
\fill [black,opacity=1] (-2,0.5) circle (2pt);
\fill [black,opacity=1] (0,0) circle (2pt);
\fill [white,opacity=1,draw=black] (2,0) circle (3pt);
\draw[draw=black,->](0.7,3.5)--(0.7,1.5);
\draw[draw=black, ->](1.3,3.5)--(1.3,1.5);
\draw[->,line width=2pt] (0,0)-- (1,0.5)node[above]{\small $\vec{w}$};
\draw[->,line width=2pt] (-4,1.5)--node[below left]{\small $\vec{v}$} (-4.5,2);
\path (-4,1.4) node[above right]{\small $p_1$}-- (-1.6, 0.5) node[above] {\small $p_{n-3}$}-- (-0.1,0.1) node[above] {\small $p_{n-2}$}--(2.2,0) node[below]{\small $p_{n-1}$}--(1.8,0.9)node[above]{\small $\bfq$}--(1.5,0.3)node{\small $\Tt$} (1.9,0.6) node[right]{\small $$};
\path (2.95,3.5) node{\small $\p_1\cup \Tt$};
\end{tikzpicture}\\
(a) & (b)
\end{tabular}}
\caption{{\small Passing from $\p_1\subset\p$ to $\p_1\cup \Tt$}}\label{fig:ori2}
\end{center}
\end{figure}

\noindent{\bf Step 3.} \em Immediate consequences of the inductive step\em. Let us consider now the {\tt V}-polygon $\p_1=[\vec{v},p_1,\dots, p_{n-2},\vec{w}]$, which has one edge less than $\p$ (see Figure~\ref{fig:ori2}(a)). By the induction hypothesis there is a polynomial map 
\begin{equation}\label{1}
f_0:\R^2\to\R^2\quad\text{with}\quad f_0(\R^2)=\p_1.
\end{equation}
Let $\ell_1,\ldots,\ell_{n-1}\in\R[\x,\y]$ be one-degree polynomials whose zeroes are the lines containing the edges of $\p_1$, and satisfying $\p_1=\{\ell_1\geq0,\ldots,\ell_{n-1}\geq0\}$; none of these is multiple of some $\x-r$ since no edge of $\p_1$ is contained in a vertical line. After setting $\Ss:=\p_1$, $\mathfrak G_\Ss=\mathfrak G'_\Ss=\{\ell_1,\dots,\ell_{n-1}\}$, $c:=a_{n-2}=0$, $d:=a_{n-1}$ and $\varphi(\x,\y):=\ell(x,y)=\ell_1(\x,\y)\cdots\ell_{n-1}(\x,\y)$, we apply Lemma~\ref{thm:dob} to obtain the polynomial map
$$
f_1:\R^2\to\R^2,\, (x,y)\mapsto(x,(1+\ell(x,y) x (x-a_{n-1}))^2y),
$$
which maps $\p_1$ onto the region
\begin{equation}\label{2}
\q:=f_1(\p_1)=\p_1\cup (]0,a_{n-1}[\times[0,+\infty[)=\p_1\cup \Tt\subset \p.
\end{equation}
Here $\Tt$ is the nonclosed triangle $(]0,a_{n-1}[\times[0,+\infty[)\setminus \p_1$, whose vertices are $p_{n-2}=(0,0)$, $p_{n-1}$ and $q:=(a_0,b_0)$ (see Figure~\ref{fig:ori2}(b)).

\vspace{1mm}
\noindent{\bf Step 4.} \em Covering the missing part of the polygon off a vertex\em. We now relocate the polygon $\p$ using an affine change of coordinates $\tau_1$ to get
\begin{equation}\label{3}
 \p':=\tau_1(\p)=[\vec{v}',p'_1,\dots,p'_{n-1},\vec{w}'],\quad\q':=\tau_1(\q),
\end{equation}
and $\p'_1:=\tau_1(\p_1)$, in such a way that the edge $p'_{n-1}\vec{w}'$ is contained in $\{y=0\}$, the vertex $q'=\tau_1(q)=(a'_0,b'_0)$ has both coordinates positive, $p'_{n-1}=(0,0)$ and the vector $\vec{v}'=(\alpha'_1,\beta'_1)$ satisfies $\alpha'_1< 0$ (as shown in Figure~\ref{fig:ori3}(a)).

Now the ray $p'_{n-2}\vec{w}'$ in $\p'_1$ is parallel to $\{y=0\}$, and the vector $\vec{u}':=q'-p'_{n-1}=(a'_0,b'_0)$ has positive coordinates. We set
$$
\p'_2:=[\vec{v}',p_1',\dots,p'_{n-1},\vec{u}']\quad\text{and}\quad\Rr:=\{x\geq a'_0,\ y\geq b'_0\},
$$ 
and notice that $\q'=\big(\p'_2\setminus p'_{n-1}\vec{u}'\big)\cup\Rr$. Notice that $\p'_2$ is a curtain.

Let $\ell'_1,\ldots,\ell_n'\in\R[\x,\y]$ denote the one-degree polynomials whose zeroes contain the edges of $\p'_2$, and such that $\p'_2=\{\ell'_1\geq0,\ldots,\ell_n'\geq0\}$. None of these polynomials is multiple of some $\x-r$ because $\p'_2$ does not contain vertical edges. Apply now Lemma~\ref{thm:dob} with $\Ss:=\p'_2\setminus p'_{n-1}\vec{u}'$, $\mathfrak G_\Ss=\mathfrak G'_\Ss=\{\ell'_1,\dots,\ell'_n\}$, $c:=0$, $d:=+\infty$ and $\varphi(x,y):=\ell'(x,y)=\ell_1'(\x,\y)\cdots\ell_n'(\x,\y)$, so that we obtain the polynomial map
$$
f_2:\R^2\to\R^2,\, (x,y)\mapsto(x,(1+\ell'(x,y)x)^2y),
$$
which produces the polynomial images 
$$
f_2(\p'_2\setminus p'_{n-1}\vec{v}')=\p'\setminus\{p'_{n-1}\}\quad\text{and}\quad
f_2(\Rr)\subset\ ]a_0,+\infty[\times[0,+\infty[\ \subset\p'\setminus\{p'_{n-1}\};
$$ 
hence, (see Figure~\ref{fig:ori3}(b))
\begin{equation}\label{4}
f_2(\q')=\p'\setminus\{p'_{n-1}\}.
\end{equation} 

\vspace{1mm}
\noindent{\bf Step 5.} \em Covering the missing vertex of the polygon\em. Since $\p$ and $\p'$ are affinely equivalent it is enough to show that this last {\tt V}-polygon is a polynomial image. To that end, we find a polynomial map that maps $\p'\setminus\{p'_{n-1}\}$ onto $\p'$, covering the `missing' vertex $p'_{n-1}=(0,0)$.

To achieve this, we relocate $\p'$ using an affine change of coordinates $\tau_2$ to obtain the convex polyhedron
\begin{equation}\label{5}
\p'':=\tau_2(\p')=[\vec{v}'',p''_1,\dots,p''_{n-1},\vec{w}''],
\end{equation}
which satisfies the following conditions (Figure~\ref{fig:ori4}(a)): 
\begin{enumerate}
\item[(i)] The vertex $p''_{n-1}$ is located at the origin.
\item[(ii)] The bounded edge $p''_{n-2}p''_{n-1}$ is contained in $\{y=0\}$.
\item[(iii)] The unbounded edge $p''_{n-1}\vec{w}''$ lies on the line $\{x=0\}$.
\item[(iv)] The polygon $\p''$ is contained in the second quadrant $\{x\le 0, y\ge 0\}$.
\end{enumerate}

\begin{figure}[ht]
\begin{center}
\scalebox{1}{
\begin{tabular}{cc}
\begin{tikzpicture}[scale=0.6,framed]
\draw[<->] (0,-1)--(0,6);
\draw[<->] (-5,0)--(4,0);
\draw[fill=lightgray, very thin] (-5,6)--(-4.5,4)--(-3.5,2.5)--(-2,1)--(0,0)--(0.5,1)--(4,1)--(4,6)--cycle;
\draw[very thick] (-5,6)--(-4.5,4)(-3.5,2.5)--(-2,1)--(0,0)(0.5,1)--(4,1);
\draw[very thick,dashed] (-4.5,4)--(-3.5,2.5);
\draw[->,very thick,dotted] (0,0)--(0.5,1);
\draw[very thin, dashed] (-2,1)--(0.5,1);
\draw[thin,dotted] (0.5,1)--(0.5,6);
\draw[thin,dotted] (0.5,1)--(3,6);
\fill [black,opacity=1] (-4.5,4) circle (2pt);
\fill [black,opacity=1] (-3.5,2.5) circle (2pt);
\fill [black,opacity=1] (-2,1) circle (2pt);
\fill [black,opacity=1] (0.5,1) circle (2pt);
\fill [white,opacity=1,draw=black] (0,0) circle (4pt);
\draw[->,line width=2pt] (0.5,1)--node[above right]{\small $\vec{w}'$} (1.5,1);
\draw[->,line width=2pt] (-4.5,4)--node[above right]{\small $\vec{v}'$} (-4.75,5);
\path (-4.5,4.1) node[right]{\small $p'_1$}-- (-3.6, 2.8) node[right] {\small $p'_{n-3}$}-- (-2.2,1.4) node[right] {\small $p'_{n-2}$}--(0.1,0) node[below left]{\small $p'_{n-1}$}--(0.5,1)node[above]{\small $q'$}--(0.6,0.4) node{$\vec{u}'$};
\path (3.5,5.5) node{\small $\q'$};
\end{tikzpicture}
&
\begin{tikzpicture}[scale=0.6,framed]
\draw[<->] (0,-1)--(0,6);
\draw[<->] (-5,0)--(4,0);
\draw[fill=lightgray, very thin] (-5,6)--(-4.5,4)--(-3.5,2.5)--(-2,1)--(0,0)--(4,0)--(4,6)--cycle;
\draw[very thick] (-5,6)--(-4.5,4)(-3.5,2.5)--(-2,1)--(0,0)--(4,0);
\draw[very thick,dashed] (-4.5,4)--(-3.5,2.5);
\draw[thin,dotted] (0,0)--(0,6);
\fill [black,opacity=1] (-4.5,4) circle (2pt);
\fill [black,opacity=1] (-3.5,2.5) circle (2pt);
\fill [black,opacity=1] (-2,1) circle (2pt);
\draw[draw=black,->](1,4.5)--(1,1.5);
\draw[draw=black, ->](2,4.5)--(2,1.5);
\draw[draw=black, ->](3,4.5)--(3,1.5);
\draw[->,line width=2pt] (0,0)--node[above right]{\small $\vec{w}'$} (1,0);
\fill [white,opacity=1,draw=black] (0,0) circle (4pt);
\draw[->,line width=2pt] (-4.5,4)--node[above right]{\small $\vec{v}'$} (-4.75,5);
\path (-4.5,4.1) node[right]{\small $p'_1$}-- (-3.6, 2.8) node[right] {\small $p'_{n-3}$}-- (-2.1,1.3) node[right] {\small $p'_{n-2}$}--(0.1,0) node[below left]{\small $p'_{n-1}$};
\path (3.5,5.5) node{$\p'$};
\end{tikzpicture}\\
(a) & (b)
\end{tabular}}
\caption{{\small Obtaining $\p'\setminus\{ p_{n-1}\}$ as a polynomial image.}}\label{fig:ori3}
\end{center}
\end{figure}
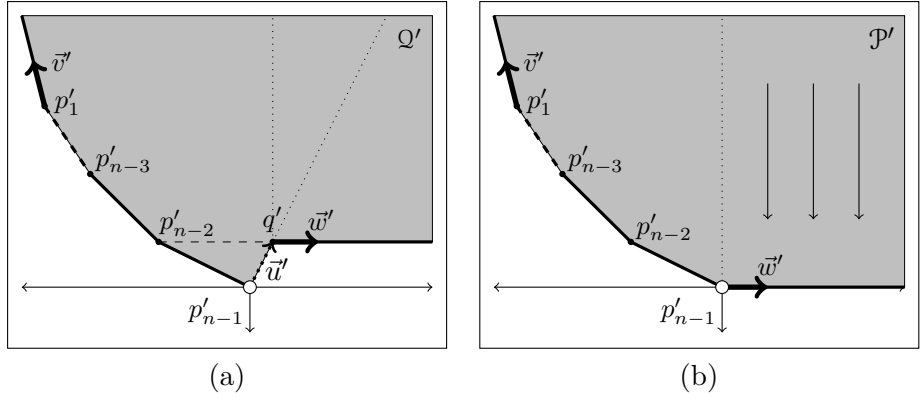
We denote now by $\ell''_1,\ldots,\ell''_n\in\R[\x,\y]$ the one-degree polynomials whose zeroes contain the edges of $\p''$ and satisfy $\p''=\{\ell''_1\geq0,\ldots,\ell''_n\geq0\}$ where $\ell''_{n-1}=\y$ and $\ell''_{n}=-\x$. None of $\ell''_1,\ldots,\ell''_{n-1}$ is multiple of some $\x-r$, since $\ell''_n$ provides the only vertical edge of $\p''$. 

\begin{figure}[ht]
\begin{center}
\scalebox{1}{
\begin{tabular}{cc}
\begin{tikzpicture}[scale=0.75,framed]
\draw[<->] (-6,0)--(1,0);
\draw[<->] (0,-2)--(0,5);
\draw[fill=lightgray, very thin] (0,0)--(-2,0)--(-4,0.5)--(-5.5,2)--(-6,4)--(-6,5)--(0,5)--cycle;
\draw[very thick] (0,5)--(0,0)--(-2,0)--(-4,0.5)(-5.5,2)--(-6,4);
\draw[very thick,dashed] (-4,0.5)--(-5.5,2);
\fill [black,opacity=1] (-2,0) circle (2pt);
\fill [black,opacity=1] (-4,0.5) circle (2pt);
\fill [black,opacity=1] (-5.5,2) circle (2pt);
\draw[->,line width=2pt] (0,0)--node[above right]{\small $\vec{w}''$} (0,1);
\draw[->,line width=2pt] (-5.5,2)--node[above right]{\small $\vec{v}''$} (-5.75,3);
\path (0,5)-- (-0.1,0) node[below right] {\small $p''_{n-1}$}-- (-1.6,0) node[above] {\small $p''_{n-2}$}--(-4.1,0.4) node[above right]{\small $p''_{n-3}$}--(-5.5,2.2) node[right]{\small $p''_{1}$};
\fill [white,opacity=1,draw=black] (0,0) circle (4pt);
\path (-1.6,4.5) node{\small $\p''\setminus\{p_{n-1}''\}$};
\end{tikzpicture}&
\begin{tikzpicture}[scale=0.75,framed]
\draw[<->] (-6,0)--(1,0);
\draw[<->] (0,-2)--(0,5);
\draw[fill=lightgray, very thin] (0,0)--(-2,0)--(-4,0.5)--(-5.5,2)--(-6,4)--(-6,5)--(0,5)--cycle;
\draw[very thick] (0,5)--(0,0)--(-2,0)--(-4,0.5)(-5.5,2)--(-6,4);
\draw[very thick,dashed] (-4,0.5)--(-5.5,2);
\draw[thin,dotted] (-2,0)--(-2,5);
\draw[draw=black,->](-1.3,3.5)--(-1.3,1.5);
\draw[draw=black, ->](-0.7,3.5)--(-0.7,1.5);
\fill [black,opacity=1] (0,0) circle (2pt);
\fill [black,opacity=1] (-2,0) circle (2pt);
\fill [black,opacity=1] (-4,0.5) circle (2pt);
\fill [black,opacity=1] (-5.5,2) circle (2pt);
\path (0,5)-- (-0.1,0) node[below right] {\small $p''_{n-1}$}-- (-1.6,0) node[above] {\small $p''_{n-2}$}--(-4.1,0.4) node[above right]{\small $p''_{n-3}$}--(-5.5,2.2) node[right]{\small $p''_{1}$};
\path (-0.5,4.5) node{\small $\p''$};
\end{tikzpicture}\\
(a)&(b)
\end{tabular}}
\caption{{\small The position of $\p''$ allows us to cover the vertex $p_{n-1}$.}}\label{fig:ori4}
\end{center}
\end{figure}
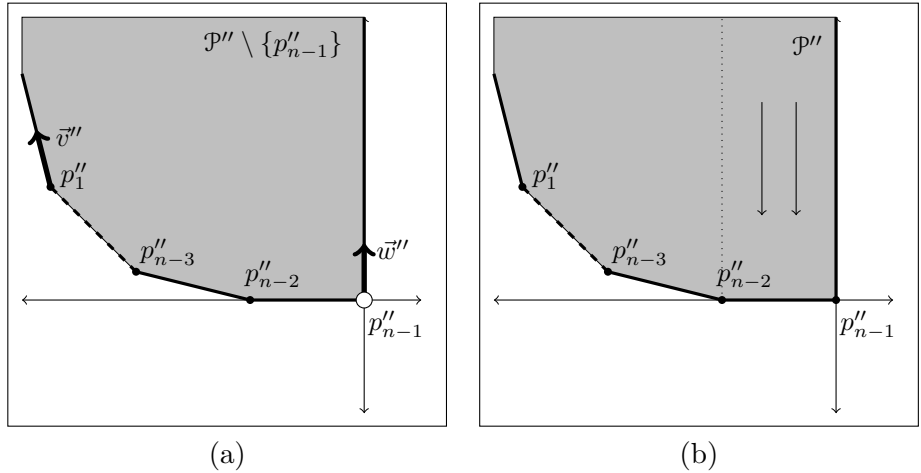

We write $p''_{n-2}=(a''_{n-2},0)$, and apply one last time Lemma~\ref{thm:dob} with $\Ss:=\p''\setminus\{p''_{n-1}\}$, $\mathfrak G_\Ss=\{\ell''_1,\dots,\ell''_{n}\}\supset \{\ell''_1,\dots,\ell''_{n-1}\}= \mathfrak G'_\Ss$, $c:=a''_{n-2}$, $d:=+\infty$, and $\varphi(\x,\y):=\ell''(x,y)=\ell''_1(\x,\y)\cdots \ell''_{n-1}(\x,\y)$ to obtain the polynomial map
$$
f_3:\R^2\to\R^2,\, (x,y)\mapsto(x,(1+\ell''(x,y)(a''_{n-2}-x))^2y),
$$
which gives us
\begin{equation}\label{7}
f_3(\p''\setminus\{p''_{n-1}\})=\p''\setminus\{p''_{n-1}\}\cup(]a_{n-2},0]\times[0,+\infty[)=\p''.
\end{equation}

\vspace{1mm}
\noindent{\bf Conclusion.} By previous equalities \eqref{1} through \eqref{7} we finally have 
\begin{align*}
\tau_1^{-1}\circ\tau_2^{-1}\circ f_3\circ \tau_2\circ f_2\circ \tau_1\circ f_1\circ f_0(\R^2)=\p,
\end{align*}
and the theorem is proved.
\end{proof}

\begin{proof}[Proof of Corollary~\em\ref{cor:pleg}]
First, by \cite[1.4(iv)]{fg1}, there exists a polynomial map $\R^2\rightarrow\R^2$ whose image is the upper open half-plane $\R\times(0,+\infty)$. Since 
$$
\R^3=\R\times\R^2\quad\text{and}\quad\R\times(\R\times(0,+\infty))=\R^2\times(0,+\infty),
$$ 
it is enough to check that ${\rm Int}(\p)$ is the image of the restriction to $\R^2\times(0,+\infty)$ of a
polynomial map $f_1:\R^3\rightarrow\R^2$. We may assume that the two unbounded sides of $\p$ are contained in the lines of equations $x=0$, $y=0$, and ${\rm Int}(\p)\subset\{x>0,y> 0\}$. By Theorem~\ref{thm:pleg}, there exists a polynomial map $f_0:\R^2\rightarrow\R^2$ such that $f_0(\R^2)=\p$. Next, consider the map
$$
\begin{array}{rcl}
f_1:\R^3=\R^2\times\R&\rightarrow&\R^2\\
(x,t)&\mapsto&f_0(x)+t(1,1).
\end{array}
$$
An straightforward computation shows that ${\rm Int}(\p)=f_1(\R^2\times(0,+\infty))$, as wanted.
\end{proof}


\end{document}